\newtheorem{theorem}{Theorem}
\newtheorem{proposition}[theorem]{Proposition}
\newenvironment{proof}[1][Proof]{\textbf{#1.} }
     {\    \rule{0.5em}{0.5em}}
\newcommand{\calA}{{\mathcal A}}
\newcommand{\calB}{{\mathcal B}}
\newcommand{\calC}{{\mathcal C}}
\newcommand{\E}{{\mathbb E}}
\newcommand{\R}{{\mathbb R}}
\begin{document}
\title{Symmetrically processed splitting integrators for enhanced Hamiltonian Monte Carlo sampling}
\author{
S. Blanes,\footnote{Instituto de Matem\'atica Multidisciplinar, Universitat Polit\`ecnica
de Val\`{e}ncia, 46022-Valencia, Spain. E-mail: serblaza@imm.upv.es}\:
M.P. Calvo,\footnote{Departamento de Matem\'atica Aplicada e IMUVA, Facultad de Ciencias, Universidad de
Valladolid,  Spain. E-mail: mariapaz.calvo@uva.es} \:
F. Casas,\footnote{Departament de Matem\`{a}tiques and IMAC, Universitat Jaume I, E-12071 Castell\'{o}n, Spain. E-mail: casas@uji.es}
\: and J.M. Sanz-Serna\footnote{Departamento de Matem\'aticas, Universidad Carlos III de Madrid, Avenida de la
Universidad 30, E-28911 Legan\'es (Madrid), Spain. E-mail: jmsanzserna@gmail.com}
}
\date{\today}

\maketitle

\begin{abstract}{We construct integrators to be used in  Hamiltonian (or Hybrid) Monte Carlo sampling. The new integrators are easily implementable and, for a given computational budget, may deliver five times as many accepted proposals as standard leapfrog/Verlet without impairing in any way the quality of the samples. They are based on a suitable modification of the   processing technique first introduced by J.C. Butcher. The idea of modified processing may also be useful for other purposes, like the construction of high-order splitting integrators with positive coefficients.}\end{abstract}

\noindent AMS numbers: 65L05, 65C05, 37J05

\noindent Keywords: Hamiltonian Monte Carlo method, splitting integrators, processing

\section{Introduction}
In this paper we show how to construct symmetrically processed splitting algorithms for efficient Hamiltonian (or Hybrid) Monte Carlo (HMC)  sampling.

HMC is a widely used sampling technique; introduced in the physics literature \cite{duane}, it has become very popular in statistics \cite{neal} and may provide large improvements over alternative approaches \cite{cances}. All the many variants of HMC share the need for integrating numerically a system of Hamiltonian differential equations at each step of the Markov chain \cite{cetraro} and in fact the gradient evaluations required by the numerical integrator dominate the computational cost of obtaining the samples. It is therefore useful to identify existing integrators or to construct new ones that are efficient in HMC sampling. Even though the leapfrog/St\"{o}rmer/\-Verlet method is the integrator usually chosen, it is possible to cut down substantially the computational cost of the integrations \emph{without impairing in any way} the quality of the sampling by using (multistage) \emph{splitting integrators}
which may be implemented as easily as leapfrog and are \emph{time reversible} and \emph{volume preserving}, two essential requirements for HMC use \cite{acta}.

There are several families of possible splitting schemes and each family  includes free parameters. The paper \cite{sergferyo} suggested a methodology for choosing the splitting parameters so as to optimize the efficiency in the HMC setting. As distinct from other works (see \cite{mclachlan,omelyan,predescu,goth,kitaura} among many others), where the choice of parameters is determined by the behaviour of the integrator as the step size \(h\) approaches 0, in \cite{sergferyo} \(h\) is not assumed to be small; it rather ranges in a suitable interval \((0,\bar h)\). This corresponds to the fact that successful HMC simulations operate with rather large values of \(h\) \cite{neal,acta}. Because free parameters are not used to boost the accuracy in the limit \(h\rightarrow 0\), all the integrators constructed in \cite{sergferyo}  are second order. This  implies that these integrators yield \emph{average} energy errors of order four \cite{optimal,acta}. {They are designed to minimize  the energy error in the proposal and to maximize (see \cite{daniel}) the probability of proposals being accepted by the accept/reject mechanism of the sampler.}
Extensive numerical experiments \cite{sergferyo,attia,mario,cedric,man,nishimura, aleardi,daniel,goth} in a variety of applications that range from Bayesian statistics to the Auxiliary Field Quantum Monte Carlo method
and theoretical results presented in \cite{daniel} endorse the soundness of the approach in \cite{sergferyo}. In particular \cite{daniel} shows experimentally that for splitting formulas that use three gradient evaluations per step, the parameter choice in \cite{sergferyo} leads to better HMC sampling than any other parameter choices. The references \cite{elena,tjana} extended the technique in \cite{sergferyo} to modified HMC algorithms that combine the basic idea of HMC with importance sampling.

The idea of \emph{processing}  numerical integrators \cite[Section 3.5]{serfer} is due to J.C. Butcher \cite{butcher}. Given a one-step integrator, sometimes called the \emph{kernel}, a processed integration requires (i) preprocessing  the initial condition, (ii) integrating with the kernel and (iii) postprocessing the solution. Pre and postprocessing should have negligible complexity, so that  the cost of an integration with processing is essentially the cost of integrating with the kernel. The interest of the idea lies in cases where the accuracy of the processed integration is higher than the accuracy the kernel alone would provide. For instance, the processed algorithm may converge with an order \(\nu\) higher  than the order \(\mu\) of the kernel; when this happens the kernel is said to possess \emph{effective order} \(\nu\). Processing did not become popular when it was first suggested, due to the difficulties of its combination with variable time steps. It reappeared \cite{lm1,lm2,procsfer} in the geometric integration scenario, where the emphasis is in constant time steps \cite{mp}. Since processing may considerably increase the efficiency of an integrator, it is natural to study whether the splitting kernels for HMC applications successfully used in \cite{sergferyo,attia,mario,cedric,man,nishimura, aleardi,daniel,goth} may be processed. Unfortunately the standard approach to processing, where the postprocessing map just inverts the action of the preprocessor, will not work, as it leads to integrations that are \emph{not} time reversible.

In this paper:
\begin{enumerate}
\item We introduce (Section \ref{quasi}) \emph{symmetric (modified) processing}, a modification of standard processing under which time reversible kernels provide time reversible integrations.
\item We provide (Section \ref{choosing}) a methodology to determine the parameters in the kernel and the pre and postprocessor to optimize the performance of the integrator in HMC sampling. This methodology extends the material in \cite{sergferyo}.
\item We construct (Section \ref{meth}) specific symmetric processed algorithms to be applied within HMC simulations.
\item We show (Section \ref{results}) by means of numerical experiments that the sampling efficiency of the new symmetric processed integrators may improve on the standard velocity Verlet integrator by a factor of five or more.
\end{enumerate}

HMC sampling is not the only application where symmetric modified processing may be useful and the final Section~\ref{final} briefly discusses another possible application area: the construction of higher-order splitting methods with positive coefficients. {{} As an illustration we show how to process the well-known Rowlands integrator \cite{rowlands} to get fourth-order integrations while only using substeps with positive coefficients. }

\section{Symmetric (modified) processing}
\label{quasi}
\subsection{Definition}
 Given a system of differential equations \((d/dt)x = f(x)\) in \(\R^D\), each one-step integrator is specified by a map \(\psi_h: \R^D\rightarrow \R^D\) that advances the numerical solution over a time-interval of length \(h\). For instance
\(\psi_h(x) = x+hf(x)\) corresponds to Euler's rule. If \(\psi_h\) is an integrator (sometimes called the kernel) and \(\pi_h:\R^D\rightarrow \R^D\) is a map, one may consider the corresponding so-called \emph{processed integrator} defined by the map
\[
\widehat{\psi}_h = \pi_h^{-1}\circ \psi_h\circ \pi_h
\]
(the superscript \({}^{-1}\) denotes inverse map and \(\circ\) means composition).
 \(N\) consecutive steps of the processed integrator correspond to the transformation
 \[
 \widehat{\psi}_h^N = \overbrace{\widehat{\psi}_h \circ \cdots \circ \widehat{\psi}_h}^{N\: {\rm times}}=
 \overbrace{(\pi_h^{-1}\circ \psi_h\circ \pi_h) \circ \cdots \circ (\pi_h^{-1}\circ \psi_h\circ \pi_h)}^{N\: {\rm times}},
 \]
 that is,
 \begin{equation}\label{eq:proc}
 \widehat{\psi}_h^N = \pi_h^{-1} \circ \psi_h^N \circ\pi_h.
 \end{equation}
 In other words, to perform \(N\) steps of the processed method, one successively  (i) applies once the map \(\pi_h\) (preprocessing), (ii) takes \(N\) steps of the kernel \(\psi_h\) and (iii) applies once the map \(\pi_h^{-1}\) (postprocessing). Since \(\pi_h\) and its inverse are applied only once per integration leg, the computational complexity  of \(\widehat{\psi}_h\) is  not very different from that of \(\psi_h\). Processing is advantageous in situations, among others,  where  \(\widehat{\psi}_h\) is more accurate than the unprocessed \(\psi_h\) (for instance \(\widehat{\psi}_h\) may have higher order of convergence or smaller error constants than \(\psi_h\)).

Recall that the true solution flow \(\phi_t\) of the system being integrated is \emph{time reversible (or symmetric)}, in the sense that  \(\phi_t^{-1}\)
 (which maps the final state into the initial condition) coincides with \(\phi_{-t}\) (which moves the initial condition backwards in time). Correspondingly, an integrator \(\psi_h\) is said to be time reversible (or symmetric) \cite[Section 3.6]{ssc} if \((\psi_h^N)^{-1}= \psi_{-h}^N\). From \eqref{eq:proc} it is easily concluded that, even if the kernel \(\psi_h\) is time reversible, the processed \(\widehat{\psi}_h\) may not be expected to be so.
The symmetric (modified) processing approach suggested in the present paper is a modification of the idea of processing that makes it possible to obtain time reversible integrators.

  To perform an integration leg spanning a time interval of length \(Nh\) with a symmetric modified processed integrator one applies the map
 \begin{equation}\label{eq:quasi}
 \widetilde{\psi}_{N,h} = \pi_h^\star \circ \psi_h^N \circ\pi_h,
 \end{equation}
 where \(\pi_h^\star\) denotes the \emph{adjoint} of \(\pi_h\), i.e.\ the map such that \(\pi_{-h}^\star = \pi_h^{-1}\) (see e.g.\ \cite[Section 3.6]{ssc} or \cite[Section 1.2]{serfer}). This differs from standard processing (see \eqref{eq:proc}) in that the adjoint rather than the inverse is used as a postprocessor.
 Since
 \[
 (\widetilde{\psi}_{N,h})^{-1} = \pi_h^{-1} \circ (\psi_h^N)^{-1} \circ(\pi_h^\star)^{-1}
 \]
 and
 \[
 \widetilde{\psi}_{N,-h} =  \pi_{-h}^\star \circ \psi_{-h}^N \circ\pi_{-h} =\pi_h^{-1} \circ \psi_{-h}^N\circ(\pi_h^\star)^{-1},
 \]
 the symmetric modified processed integrator \eqref{eq:quasi} will be time reversible if \(\psi_h\) is time reversible.

 For processing integrators, the map \eqref{eq:proc} that advances the solution over a time interval \([0,Nh]\) is the \(N\)-th power of the one-step map \(\widehat{\psi}_h\). For symmetric processing the map \(\widetilde{\psi}_{N,h}\) in \eqref{eq:quasi} is not obtained by means of a similar \(N\)-fold composition of a one-step map.

 \subsection{The case of splitting integrators}
 Although the idea of symmetric processing is completely general,
 our attention is restricted to the case where in \eqref{eq:quasi} \(\psi_h\) and \(\pi_h\) are constructed via splitting (see e.g.\ the monographs \cite{ssc,hairer,serfer} and the survey \cite{robert}). If the system being integrated may be written in the split form
 \[
 \frac{d}{dt} x= f(x) = f^{A}(x)+f^{B}(x),
 \]
 and \(\phi_t^A\) and \(\phi_t^B\) represent the exact flows of the split systems,
 we deal with  splitting kernels
 \begin{equation}\label{eq:splitting}
 \psi_h = \phi_{b_1h}^{B}\circ \phi_{a_1h}^{A}\circ \cdots \circ\phi_{a_{r-1}h}^{A}\circ \phi_{b_{r}h}^{B}\circ
 \phi_{a_rh}^{A}% este es el central
 \circ \phi_{b_{r}h}^{B}\circ\phi_{a_{r-1}h}^{A}\circ\cdots \circ \phi_{a_1h}^{A}\circ \phi_{b_1h}^{B}.
 \end{equation}
 (Since it is possible to set \(a_r=0\), this format includes integrators where the central flow is
 \(\phi^B\) rather than \(\phi^A\). Similarly one may set \(b_1=0\) to have integrators where the extreme flows are \(\phi^A\).) The palindromic structure of \eqref{eq:splitting} ensures time reversibility. This integrator is consistent (in fact of order \(\geq 2\) due to symmetry) if
 \begin{equation}\label{eq:cons0}
 2a_1 +\cdots +2a_{r-1}+a_r= 1,\qquad 2b_1 +\cdots +2b_{r-1}+2b_r= 1,
 \end{equation}
 and in what follows we always assume that these conditions hold.

 Similarly, we choose \(\pi_h\) to be a composition of \(2s\) flows of the form
 \begin{equation}\label{eq:pi}
 \pi_h =\phi_{c_sh}^{A} \circ \phi_{d_sh}^{B}\circ \cdots \circ \phi_{c_1h}^{A} \circ \phi_{d_1h}^{B},
 \end{equation}
 which leads to
  \begin{equation}\label{eq:pistar}
 \pi_h^\star = \phi_{d_1h}^{B}\circ \phi_{c_1h}^{A}\circ \cdots\circ \phi_{d_sh}^{B}\circ \phi_{c_sh}^{A}.
 \end{equation}
 We assume that
 \begin{equation}\label{eq:cons}c_1+\dots +c_s = 0,\qquad d_1+\dots +d_s = 0,
  \end{equation}
  which imply that \(\pi_h\) and \(\pi_h^\star\) differ from the identity map by \(\mathcal{O}(h^2)\) terms. In this way, it is clear that \eqref{eq:quasi} is a time reversible integrator with even order of accuracy \(\geq 2\).

 We use the abbreviations
%\begin{equation}\label{eq:ab0}
\[(b_1,a_1,\dots,a_{r-1},b_{r},a_r,b_{r},a_{r-1}, \dots, a_1,b_1),\]
%\end{equation}
%
and \[
 (c_s,d_s, \dots, c_1,d_1),\qquad (d_1,c_1,\dots,d_s,c_s)
 \]
 to refer to \eqref{eq:splitting}, \eqref{eq:pi}, and \eqref{eq:pistar} respectively. In this way, \eqref{eq:quasi} is denoted as
 \begin{equation}\label{eq:ab}
 (d_1,c_1,\dots,d_s,c_s)(b_1,a_1\dots,b_{r},a_r,b_{r}, \dots, a_1,b_1)^N(c_s,d_s, \dots, c_1,d_1);
 \end{equation}
 the palindromic structure is apparent.

 \subsection{The Hamiltonian case}
 We have in mind the integration of  Hamiltonian systems of the form
 \begin{equation}\label{eq:ham}
 \frac{d}{dt} q = M^{-1}p,\qquad \frac{d}{dt} p = -\nabla V(q),
 \end{equation}
 where \(M\) is a symmetric, positive definite \(d\times d\) mass matrix and \(V\) denotes the potential. Under the familiar \(q/p\) or potential/kinetic splitting, the split flow \(\phi_t^A\) is the solution flow of the system
 \[
  \frac{d}{dt} q = M^{-1}p,\qquad \frac{d}{dt} p = 0
 \]
 and \(\phi_t^B\) corresponds to
 \[
  \frac{d}{dt} q = 0,\qquad \frac{d}{dt} p = -\nabla V(q).
 \]
In molecular dynamics the transformations \(\phi_t^A\) and \(\phi_t^B\) are known as drifts and kicks respectively. Thus an integration leg with the symmetric processed integrator \eqref{eq:quasi} is a succession kick, drift, kick, \dots, kick with a  palindromic pattern and therefore time reversible {in the sense used above, i.e.\ \((\widetilde \psi_{N,h})^{-1} = \widetilde\psi_{N,-h}\). This is equivalent to its being \emph{time reversible with respect to momentum flipping} \cite{acta}, i.e.\
\(\widetilde \psi_{N,h}(q,p) =(q^\prime,p^\prime)\) implies \(\widetilde \psi_{N,h}(q^\prime,-p^\prime) =(q,-p)\). (Note that changing \(t\) into \(-t\) and \(p\) into \(-p\) leaves \eqref{eq:ham} invariant.)
} In addition the map \eqref{eq:quasi} is volume preserving as a composition of kicks and drifts. Time reversibility and volume preservation ensure that \eqref{eq:quasi} may be used in HMC applications with the simple standard recipe for the accept/reject probability applied with leapfrog \cite{acta}.

 \section{Choosing the parameters  in HMC applications}
 \label{choosing}
 \begin{table}
\hrule
\bigskip
{{}

Given $q^{(0)}\in\mathbb{R}^d$, $m_{\max} \geq 1$, set $m=0$.

\begin{enumerate}
\item (Momentum refreshment.) Draw $p^{(m)}\sim N(0,M)$.

\item (Integration leg.) Compute $(q^*,p^*)$  ($q^*$ is the proposal) by  integrating, by means of
a reversible, volume-preserving integrator with step-size \(h\), the Hamiltonian system \eqref{eq:ham} over an interval  \(0\leq t\leq Nh\). The initial condition is \((q^{(m)},p^{(m)})\).

\item (Accept/reject.) Calculate \[a^{(m)} = \min\big(1, \exp(H(q^{(m)},p^{(m)})-H(q^*,p^*))\big)\]
and draw $u^{(m)} \sim U(0,1)$. If $a^{(m)}>u^{(m)}$, set $q^{(m+1)}=q^*$ (acceptance); otherwise set $q^{(m+1)} = q^{(m)}$ (rejection).

\item Set $m= m+1$. If $m = m_{\max}$ stop; otherwise go to step 1.
\end{enumerate}
\hrule
\caption{HMC algorithm.  The function \(H =(1/2) p^TM^{-1}p+V(q)\) is the Hamiltonian.
 The algorithm generates a Markov chain $q^{(0)}\mapsto q^{(1)} \mapsto \dots \mapsto q^{(m_{\max})}$ reversible with respect to the target probability distribution \(\propto \exp(-V(q))\).}
 \label{tab:alg1}
}\end{table}

 {{}The basic HMC algorithm is summarized in Table~\ref{tab:alg1}. Even though other possibilities exist, we here take the view that the length \(Nh\) of each integration leg is a quantity of order one, to ensure that the proposal is sufficiently far from the current state in order to decrease the correlation of the Markov chain \cite{acta}. Once a suitable value of the combination \(Nh\) has been identified (typically by numerical experiment), the value of \(h\) has to be chosen small enough for the energy error \(H(q^{(m)},p^{(m)})-H(q^*,p^*)\) to be small so as to ensure a reasonable empirical rate of acceptance, see \cite{optimal}. }

 The paper \cite{sergferyo} suggested a technique to identify \lq\lq good\rq\rq\ parameter values in families of numerical integrators for the Hamiltonian system \eqref{eq:ham} in the HMC context.
 Extensive numerical  experiments in statistical and molecular dynamics problems reported in \cite{cedric,mario,daniel} show clearly the soundness of the approach in \cite{sergferyo} and it is this approach that we follow here. However the material in that paper   cannot be directly applied to symmetric modified processed algorithms and in this section we present the necessary modifications.

 The technique in \cite{sergferyo} is based on discriminating between integrators by applying them to the one-degree-of-freedom model Hamiltonian \(p^2/2+q^2/2\) for which the equations of motion correspond to the standard harmonic oscillator and the target probability density function is \(\propto\exp(-(p^2+q^2)/2)\) so that \(q\) and \(p\) are independent with a standard normal distribution. This is similar to the well-established idea of discriminating between  integrators for stiff systems of differential equations by applying them to the simple scalar equation \(dy/dt = \lambda y\). {With the methodology in \cite{sergferyo} integrators are constructed by imposing that they have small energy errors when applied to the standard harmonic oscillator. It may be proved rigorously by diagonalization that the methods constructed in this way also yield small energy errors for general multivariate Gaussian targets.
 In addition, numerical experiments show that these integrators also behave well when applied to arbitrary target probability distributions; this is particularly true for targets in high dimensions, where a central limit theorem applies \cite{daniel}.}

  For the harmonic oscillator, \(N\) steps of a given palindromic splitting integrator \eqref{eq:splitting} define a linear transformation \((q_0,p_0) \mapsto (q_N,p_N)\) of the form
 \begin{equation}\label{eq:linear}
 \left[\begin{matrix}q_N\\p_N\end{matrix}\right] =
  \left[\begin{matrix} C & \chi S\\-\chi^{-1}S&C\end{matrix}\right]
  \left[\begin{matrix}q_0\\p_0\end{matrix}\right].
 \end{equation}
 Here \((q_0,p_0)\) is the initial condition, \((q_N,p_N)\) the numerical solution at the end of the integration leg,  \(C\) and \(S\) are abbreviations for \(\cos(N\theta_h)\) and \(\sin(N\theta_h)\) respectively and  \(\chi=\chi_h\), \(\theta=\theta_h\) are quantities that change with the step length \(h\). A priori, \(\theta_h\) may be complex-valued, but if \(h\) is of sufficiently small magnitude, then
 the \(2\times 2\) matrix above is power bounded (stability) and, as shown in \cite{sergferyo}, this corresponds to \(\theta_h\) being real, something we assume hereafter. (In fact the format \eqref{eq:linear} is not specific to splitting integrators; it is shared by any reasonable time reversible, volume preserving unprocessed integrator for \eqref{eq:ham}, see \cite{sergferyo,acta}.)
 If \(N\) varies, the points \((q_N,p_N)\) defined by \eqref{eq:linear} move on an ellipse of the \((q,p)\)-plane, whose eccentricity is governed by \(\chi_h\). For \(\chi_h=1\), the transformation in \eqref{eq:linear} is a rotation, the ellipse becomes a circle and the energy error in the numerical simulation \((1/2)(q_N^2+p_N^2) - (1/2)(q_0^2+p_0^2)\) vanishes: all proposals are then accepted, {{} see Step 3 in Table~\ref{tab:alg1}}.

 Similarly to \eqref{eq:linear}, for the standard harmonic oscillator, the preprocessor \(\pi_h\) and the postprocessor \(\pi_h^\star\) in \eqref{eq:pi} and \eqref{eq:pistar} are respectively associated with \(2\times 2\) matrices of the form
 \begin{equation}\label{eq:linearpro}
 \left[\begin{matrix}
\alpha& \beta\\
\gamma&\delta
\end{matrix}\right],
\qquad
\left[\begin{matrix}
\delta& \beta\\
\gamma&\alpha
\end{matrix}\right],
 \end{equation}
 where \(\alpha=\alpha_h\), \(\beta=\beta_h\), \(\gamma=\gamma_h\), \(\delta=\delta_h\) are polynomials in \(h\), with \(\alpha_h\delta_h -\beta_h\gamma_h=1\) by conservation of volume. In addition \(\alpha_h\) and \(\delta_h\) are even in \(h\) and \(\beta_h\) and \(\gamma_h\) are odd so that
 \[
 \left[\begin{matrix}
\delta_{-h}& \beta_{-h}\\
\gamma_{-h}&\alpha_{-h}
\end{matrix}\right]
 =
 \left[\begin{matrix}
\delta_{h}& -\beta_{h}\\
-\gamma_{h}&\alpha_{h}
\end{matrix}\right]
=
\left[\begin{matrix}
\alpha_h& \beta_h\\
\gamma_h&\delta_h
\end{matrix}\right]^{-1},
 \]
 as needed for a map and its adjoint. (It is perhaps useful to observe that in standard processing  \eqref{eq:proc}, the matrix of the postprocessor would be
 \[
 \left[\begin{matrix}
\alpha_h& \beta_h\\
\gamma_h&\delta_h
\end{matrix}\right]^{-1} = \left[\begin{matrix}
\delta_h& -\beta_h\\
-\gamma_h&\alpha_h
\end{matrix}\right],
 \]
 which differs from the postprocessing matrix for symmetric processing given in \eqref{eq:linearpro} in the sign of the non-diagonal entries; these entries are \(\mathcal{O}(h)\) as \(h\rightarrow 0\).)

Combining \eqref{eq:linear} with \eqref{eq:linearpro}, one integration leg for \eqref{eq:quasi}  is then given  by
 \begin{equation}\label{eq:lineartres}
 \left[\begin{matrix}q_N\\p_N\end{matrix}\right] =
  \left[\begin{matrix} \calA & \calB\\\calC&\calA\end{matrix}\right]
  \left[\begin{matrix}q_0\\p_0\end{matrix}\right],
 \end{equation}
 with
 \[
 % \nonumber to remove numbering (before each equation)
 \left[\begin{matrix} \calA & \calB\\\calC&\calA\end{matrix}\right]=
\left[\begin{matrix}
\delta& \beta\\
\gamma&\alpha
\end{matrix}\right]
\left[\begin{matrix} C & \chi S\\-\chi^{-1}S&C\end{matrix}\right]
\left[\begin{matrix}
\alpha& \beta\\
\gamma&\delta
\end{matrix}\right].
 \]
 We multiply out the matrices to find:
 \begin{eqnarray*}
 \calA &=& C (\alpha\delta+\beta\gamma)+S(\gamma\delta\chi-\alpha\beta\chi^{-1}), \\
 \calB&=& C(2\beta\delta)+S(\delta^2\chi-\beta^2\chi^{-1}), \\
 \calC&=&  C(2\alpha\gamma)+S(\gamma^2\chi-\alpha^2\chi^{-1}).
 \end{eqnarray*}
 The \lq\lq ideal\rq\rq\ preprocessor has \(\alpha_h = \chi_h^{1/2}\), \(\delta_h = 1/\alpha_h\), \(\beta_h=\gamma_h=0\) leading to \(\calA = C\), \(\calB = S\), \(\calC = -S\). {{} This processing is ideal because then  \eqref{eq:lineartres} is a rotation,   the symmetrically processed integrator conserves energy exactly and there are no rejections in HMC sampling.} Unfortunately such ideal preprocessor cannot be realized by means of a splitting formula of the form \eqref{eq:pi} and our aim now is to identify processors so that \eqref{eq:lineartres} is, in a suitable sense, as close to a rotation as possible.

 At this stage, we assume that \(q_0\) and \(p_0\) are independent random variables with standard normal distribution (i.e.\ that the Markov chain is at stationarity) and consider the change in energy
 \[
 \Delta(q_0,p_0) = \frac{1}{2} (q_N^2+p_N^2) - \frac{1}{2} (q_0^2+p_0^2)
 \]
 over one integration leg. Here \((q_N,p_N)\) and therefore \(\Delta(q_0,p_0)\) are deterministic functions of the random initial condition and therefore random variables themselves. By conservation of energy \(\Delta(q_0,p_0)\) would vanish if the integrator were exact; in HMC simulations small values of \(\Delta\) correspond to high acceptance probability. {{}In fact, it is proved in \cite[Theorem 1]{daniel} that minimizing the expected energy error is equivalent to maximizing the expected acceptance rate at stationarity.}  We have the following result:
 \begin{proposition}With the preceding notation,
 \(\E(\Delta) = (1/2)(\calB+\calC)^2
 \).
 \end{proposition}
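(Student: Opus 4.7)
The plan is a short direct calculation. I would start by writing out the components of \eqref{eq:lineartres} as
\[
q_N = \calA q_0 + \calB p_0, \qquad p_N = \calC q_0 + \calA p_0,
\]
and then expand
\[
q_N^2+p_N^2 = (\calA^2+\calC^2)\, q_0^2 + 2\calA(\calB+\calC)\, q_0 p_0 + (\calA^2+\calB^2)\, p_0^2.
\]
Taking expectations using \(\E(q_0^2) = \E(p_0^2) = 1\) and \(\E(q_0 p_0) = 0\) (because \(q_0\) and \(p_0\) are independent standard normals at stationarity), I obtain
\[
\E(q_N^2+p_N^2) = 2\calA^2 + \calB^2 + \calC^2,
\]
and hence
\[
\E(\Delta) = \tfrac{1}{2}\big(2\calA^2 + \calB^2 + \calC^2\big) - 1 = \calA^2 - 1 + \tfrac{1}{2}(\calB^2+\calC^2).
\]

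The crux is to show that \(\calA^2-\calB\calC = 1\), for then the right-hand side becomes \(\tfrac{1}{2}(\calB^2+2\calB\calC+\calC^2) = \tfrac{1}{2}(\calB+\calC)^2\), as desired. This is precisely the statement that the matrix in \eqref{eq:lineartres} has unit determinant, and it follows immediately from volume preservation: the rotation-like matrix associated with the kernel has determinant \(C^2+S^2=1\), and the preprocessor and postprocessor matrices both have determinant \(\alpha\delta-\beta\gamma=1\) (noted explicitly in the text after \eqref{eq:linearpro}); their product therefore has determinant one.

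I do not expect any real obstacle. The only thing worth being careful about is the bookkeeping of which matrix entry is which (the off-diagonal entries of the postprocessor matrix in \eqref{eq:linearpro} differ in sign from those appearing in ordinary processing, but this has already been accounted for when \(\calA,\calB,\calC\) were computed just above the proposition). Once the determinant identity is in hand, the result drops out in one line.
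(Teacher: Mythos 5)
Your proposal is correct and follows essentially the same route as the paper: expand \(q_N^2+p_N^2\), take expectations using \(\E(q_0^2)=\E(p_0^2)=1\), \(\E(q_0p_0)=0\), and complete the square via the unit-determinant identity \(\calA^2-\calB\calC=1\), which the paper likewise attributes to conservation of volume. Your extra remark justifying the determinant identity as a product of three unit-determinant factors is a harmless elaboration of what the paper leaves implicit.
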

 \begin{proof}
 An elementary computation yields:
\[
2\Delta =
(\calA^2+\calC^2-1)q_0^2+2(\calA\calB+\calC\calA)q_0p_0+(\calB^2+\calA^2-1)p_0^2
\]
and therefore taking expectations
\[
2\:\E(\Delta) = 2\calA^2+\calB^2+\calC^2-2.
\]
The result follows because \(\calA^2 -\calB\calC = 1\) by conservation of volume.
 \end{proof}

 Note that \(\E(\Delta) \geq 0\), a well-known fact in HMC simulations \cite{optimal}. The proposition leads to our next result that extends to the situation at hand the bound in \cite[Proposition 4.3]{sergferyo} valid for the unprocessed case.
\begin{proposition}The expectation of \(\Delta\) may be bounded above as follows:
 \[
\E(\Delta)\leq \rho_h,
\]
with
\[\rho_h =
2 (\alpha\gamma+\beta\delta)^2+\frac{1}{2}\big[(\delta^2+\gamma^2)\chi-(\alpha^2+\beta^2)\chi^{-1}\big]^2
.
\]
\end{proposition}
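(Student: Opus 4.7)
The plan is to start from the preceding proposition, which gives the clean identity $\E(\Delta) = \tfrac{1}{2}(\calB+\calC)^2$, and then estimate $(\calB+\calC)^2$ by Cauchy--Schwarz, exploiting the fact that $C^2+S^2 = \cos^2(N\theta_h)+\sin^2(N\theta_h) = 1$.

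Concretely, I would add the expressions for $\calB$ and $\calC$ displayed just before the previous proposition to get
\[
\calB+\calC = C\cdot 2(\alpha\gamma+\beta\delta) + S\cdot\bigl[(\delta^2+\gamma^2)\chi-(\alpha^2+\beta^2)\chi^{-1}\bigr].
\]
Set $u = 2(\alpha\gamma+\beta\delta)$ and $v = (\delta^2+\gamma^2)\chi-(\alpha^2+\beta^2)\chi^{-1}$, so that $\calB+\calC = Cu+Sv$. By the Cauchy--Schwarz inequality (or, equivalently, by expanding $(Cu+Sv)^2 = C^2u^2+2CSuv+S^2v^2$ and using $2CSuv \leq (C^2v^2+S^2u^2)$, then $C^2+S^2=1$),
\[
(\calB+\calC)^2 \leq (C^2+S^2)(u^2+v^2) = u^2+v^2.
\]
Dividing by $2$ recovers exactly
\[
\E(\Delta) = \tfrac{1}{2}(\calB+\calC)^2 \leq 2(\alpha\gamma+\beta\delta)^2+\tfrac{1}{2}\bigl[(\delta^2+\gamma^2)\chi-(\alpha^2+\beta^2)\chi^{-1}\bigr]^2 = \rho_h,
\]
which is the claimed bound.

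There is essentially no obstacle: the only substantive input beyond the preceding proposition is the trigonometric identity $C^2+S^2=1$ (valid because $\theta_h\in\R$ in the stable regime assumed in the paper) together with the elementary inequality $(Cu+Sv)^2\leq (C^2+S^2)(u^2+v^2)$. I would just verify that no cross term in $uv$ has been dropped and that the factor $1/2$ in front of $v^2$ matches, both of which are immediate from the algebra above. Note that the bound is sharp up to the oscillating factor $Cu+Sv$ versus its worst case, so $\rho_h$ captures the correct dependence on $h$ of the expected energy error and is the natural objective function to minimize in the parameter-selection procedure of the next section.
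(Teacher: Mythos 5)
Your proof is correct and follows essentially the same route as the paper: both start from the identity \(\E(\Delta)=\tfrac12(\calB+\calC)^2\), write \(\calB+\calC\) as the inner product of the vector with components \(2(\alpha\gamma+\beta\delta)\) and \((\delta^2+\gamma^2)\chi-(\alpha^2+\beta^2)\chi^{-1}\) with the unit vector \((C,S)\), and apply Cauchy--Schwarz using \(C^2+S^2=1\) (valid since \(\theta_h\) is real in the stable regime). No further comment is needed.
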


\begin{proof}From the expressions for \(\calB\) and \(\calC\)
\[
(\calB+\calC)^2=\Big( 2C(\alpha\gamma+\beta\delta)+
 S\big[(\delta^2+\gamma^2)\chi-(\alpha^2+\beta^2)\chi^{-1}\big]\Big)^2.
\]
In the right hand-side we have the standard dot product  of the  vector \({\bf v}_1\in\R^2\) with components
\(2(\alpha\gamma+\beta\delta)\) and \((\delta^2+\gamma^2)\chi-(\alpha^2+\beta^2)\chi^{-1}\) and the \emph{unit}  vector \({\bf v}_2\in\R^2\)
with components \(C=\cos(N\theta_h)\) and \(S=\sin(N\theta_h)\). {{} Invoking the Cauchy-Schwarz inequality,} the magnitude of the inner product can then be bounded above by the length of \({\bf v}_1\) and the result follows easily.
\end{proof}

For a fixed symmetric processed integrator, the upper bound  \(\rho_h\) is \emph{independent of the number of steps} \(N\); it only depends on \(h\) and does so through the quantity \(\chi\) associated with the kernel and with the quantities \(\alpha\), \(\beta\), \(\gamma\), \(\delta\) associated with the pre and postprocessor. Of course, in the case where a \emph{family} of integrators is considered, for each \(h\) the quantity \(\rho_h\)  changes with the specific choice of algorithm within the family. According to the strategy  in \cite{sergferyo}, one should pick up a value \(\bar h\) that represents the maximum value of \(h\) to be used in the simulations of the model problem and then
prefer the member of the family that \emph{minimizes} the expected energy-error metric
\[
\| \rho \|_{\bar h} = \max_{0<h<\bar h} \rho_h.
\]
{Thus, for a given family of integrators depending on parameters \(a\), \(b\), \(c\), \dots, the parameter values are determined by minimizing \(\| \rho \|_{\bar h}\), a quantity related to the performance of the integrators when they are applied to the model harmonic problem. This minimization is performed once and for all, i.e.\ the parameter values found in this way are used for all target distributions.
}

In \cite{sergferyo} it is recommended to set \(\bar h\) equal to the number of evaluations of \(\nabla V\) necessary to perform a single time step. Note that this implies that making the integrator more computationally intensive by increasing the number of gradient evaluations per step increases the value of \(\| \rho \|_{\bar h}\). (It is also possible  \cite{mario} to adapt the value of \(\bar h\) to the specific Hamiltonian under consideration, but that line of thought will not be pursued here.)

\section{Specific integrators}
\label{meth}
Several specific (unprocessed) integrators were constructed in \cite{sergferyo} by means of the methodology introduced there. In the next section we will report numerical results for a method (to be referred to as BlCaSa)  of the form \eqref{eq:splitting} with \(r=2\). A single time step of BlCaSa uses four evaluations of \(\nabla V\), but, since the evaluation of \(\nabla V\) at the first kick of the next time step coincides with the evaluation at the last kick of the current step, the computational cost is essentially three gradient evaluations per time step. Integrators that use two or four evaluations per time step were also constructed in \cite{sergferyo}, but that reference found three evaluations per time step to be preferable. With two evaluations,
the resulting larger value \(\| \rho \|_{\bar h}\) offsets the benefit of the smaller computational cost per time step. Four evaluations lead to a marginal improvement of
\(\| \rho \|_{\bar h}\), which does not really compensate for the extra complication. In \cite{daniel} BlCaSa was found to clearly outperform in HMC applications other integrators of the family \eqref{eq:splitting} with \(r=2\). For these reasons we will use BlCaSa as a measuring rod to assess the efficiency of the symmetric processed integrators to be constructed.

We focus our attention on symmetric processed integrators of the form \eqref{eq:ab} with \(r=2\). After imposing the consistency requirement \eqref{eq:cons0},
we may regard \(a= a_1\) and \(b= b_2\) as free parameters for the kernel. It is shown in \cite{cedric} that unless \(a=b/(6b-1)\) the stability interval of the kernel is very short, which makes the integrator uncompetitive. Therefore we impose this relation and  deal with a one-parameter family of kernels. To keep pre and postprocessing as simple as possible, we set \(s=2\), the lowest value for which the consistency relations \eqref{eq:cons} have a nontrivial solution.
We thus work with a three parameter family of  integrators of the form
\[
   (d, c, -d, -c) \, \left(\frac{1}{2} - b, a, b, 1-2a, b, a, \frac{1}{2}-b \right)^N \, (-c, -d, c, d),
\]
where \(a=b/(6b-1)\).
An integration leg requires a total of \(3N+5\) gradient evaluations (two of them within the pre or postprocessing). Of course, if \(N\) is large this is approximately \(3N\).

\begin{table}[t]
\begin{center}
\begin{tabular}{|l||l|l|l||c||c|}
 $\bar h$ &    $b$  & $c$  &  $d$  &  $\| \rho \|_{\bar h}$ &  $h_s$ \\ \hline
 $3$     & $0.381120$ & $-----$ & $----$ & $7 \times 10^{-5}$ &$4.662$\\ \hline
  $3$    &  $0.348674$  &  $-0.075640$  & $0.069720$     &  $6 \times 10^{-8}$ & $4.985$ \\
  $3.5$  & $0.346660$ & $-0.079510$  &  $0.070171$  &  $5 \times 10^{-7}$  & $5.010$ \\
  $4$    & $0.343684$ & $-0.084690$  &  $0.071880$  &  $5 \times 10^{-6}$  & $5.048$ \\
   $4.5$ & $0.340200$ & $-0.093500$  &  $0.072800$  &  $5 \times 10^{-5}$  &  $5.095$ \\
   \hline
 \end{tabular}
 \end{center}
 \caption{\small The last four rows give information on symmetric processed integrators for HMC applications. The methods have been found by minimizing the expected energy-error metric $\| \rho \|_{\bar h}$ for the values of \(\bar h\) displayed in the leftmost column. The first row corresponds to the BlCaSa integrator. The table provides the parameter values to run the integrators and, in the last column, the length of the linear stability interval of the kernel.}
 \label{table}
 \end{table}

{{}As discussed above, once \(\bar h\) has been chosen, we determine the values of the parameters \(b\),  \(c\), \(d\) by minimizing \(\| \rho \|_{\bar h}\). We use the following procedure. For fixed \(b\), \(c\), \(d\), we study
\(\rho_h\) as a function of \(h\) to check whether the largest \(\rho_{\max}\) of the local maxima of \(\rho_h\)  in the open interval \(0<h<\bar h\) does not exceed \(\rho_{\bar h}\), the value at the upper end of the interval. Once an initial set of parameter values with \(\rho_{\max}\leq \rho_{\bar h}\) has been found by trial and error, we use continuation in \(b\), \(c\), \(d\) to improve the value of \(\rho_{\bar h}\) while checking that \(\rho_{\bar h}\geq \rho_{\max}\).  Typically several sets of values of \(b\), \(c\), \(d\)  exist leading  to essentially the same value of
\(\| \rho \|_{\bar h}\).
}%end of colour red

Initially  we set \(\bar h = 3\), the choice suggested in \cite{sergferyo}. The values  we obtained may be seen in Table~\ref{table}, along with the length \(h_s\) of the stability interval of the kernel, i.e.\ the supremum of the step sizes \(h\) for which the matrix in \eqref{eq:linear} may be bounded independently of \(N\)  thus guaranteeing that errors do not grow exponentially as \(N\) increases. (Note that the stability of the kernel determines the stability of the overall integrator, since the pre and postprocessor are applied only once.) For comparison we have also included in the first row of the table the data corresponding to BlCaSa: symmetric processing results in a reduction of \(\| \rho \|_{3}\) \emph{by three orders of magnitude}. This reduction is achieved at the price of only four additional gradient evaluations per integration leg. {{} (Values  of \(\| \rho \|_{\bar h}\) reported in the table have been rounded above, and are therefore upper bounds.) }

The extremely small value of \(\| \rho \|_{3}\)   that may be achieved in this way, prompted us to explore larger values of \(\bar h\), so as to obtain integrators meant to operate with larger step sizes. Our results with
\(\bar h = 3.5\), \(4\) or \(4.5\) are also reported in the table. The integrator specified in the last row, run with \(0<h<4.5\), is (on the Gaussian model) as accurate as BlCaSa when run with \(0<h<3\).

\section{Numerical results}
\label{results}
\begin{figure}[p]
\includegraphics[width=0.48\hsize]{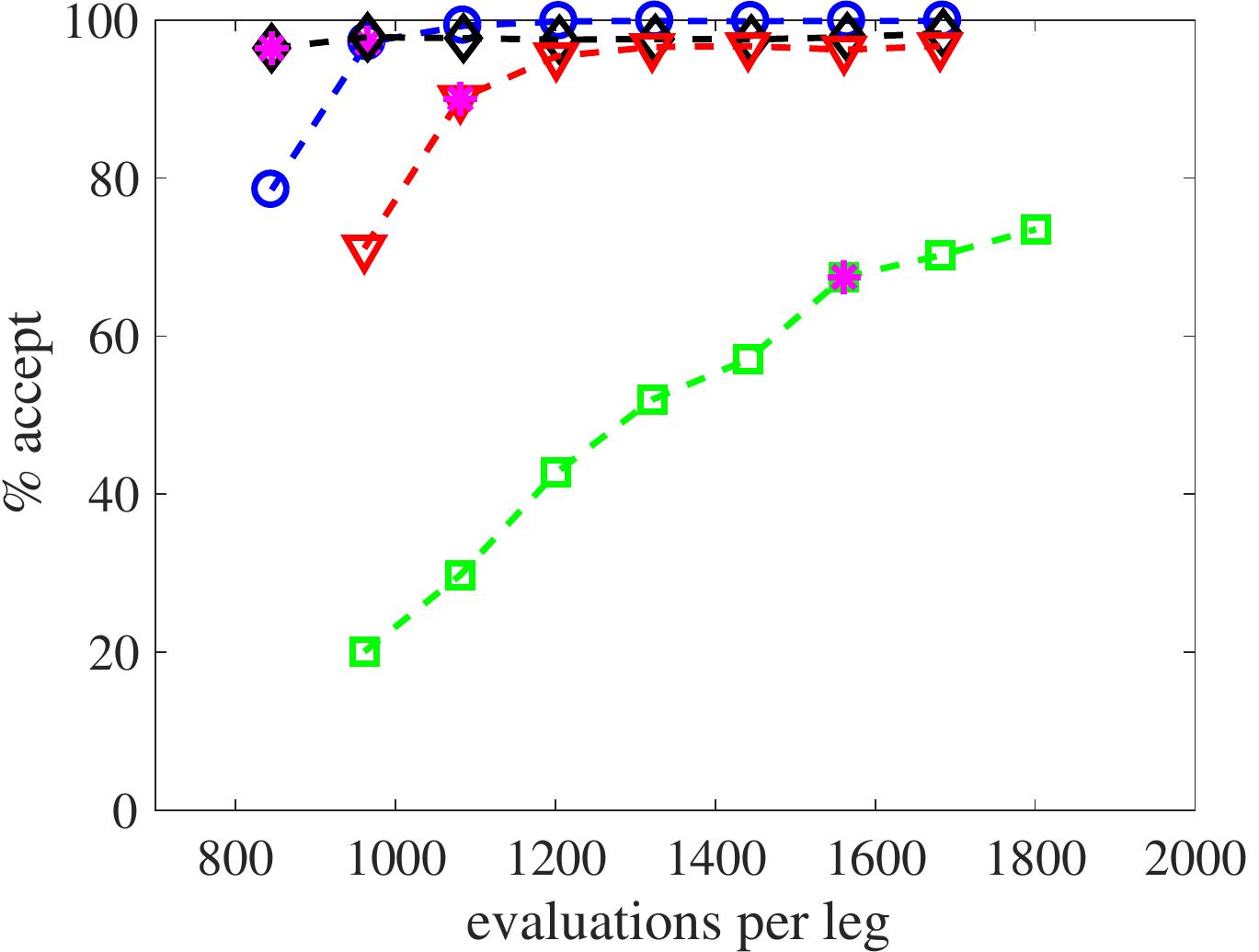}\quad
\includegraphics[width=0.48\hsize]{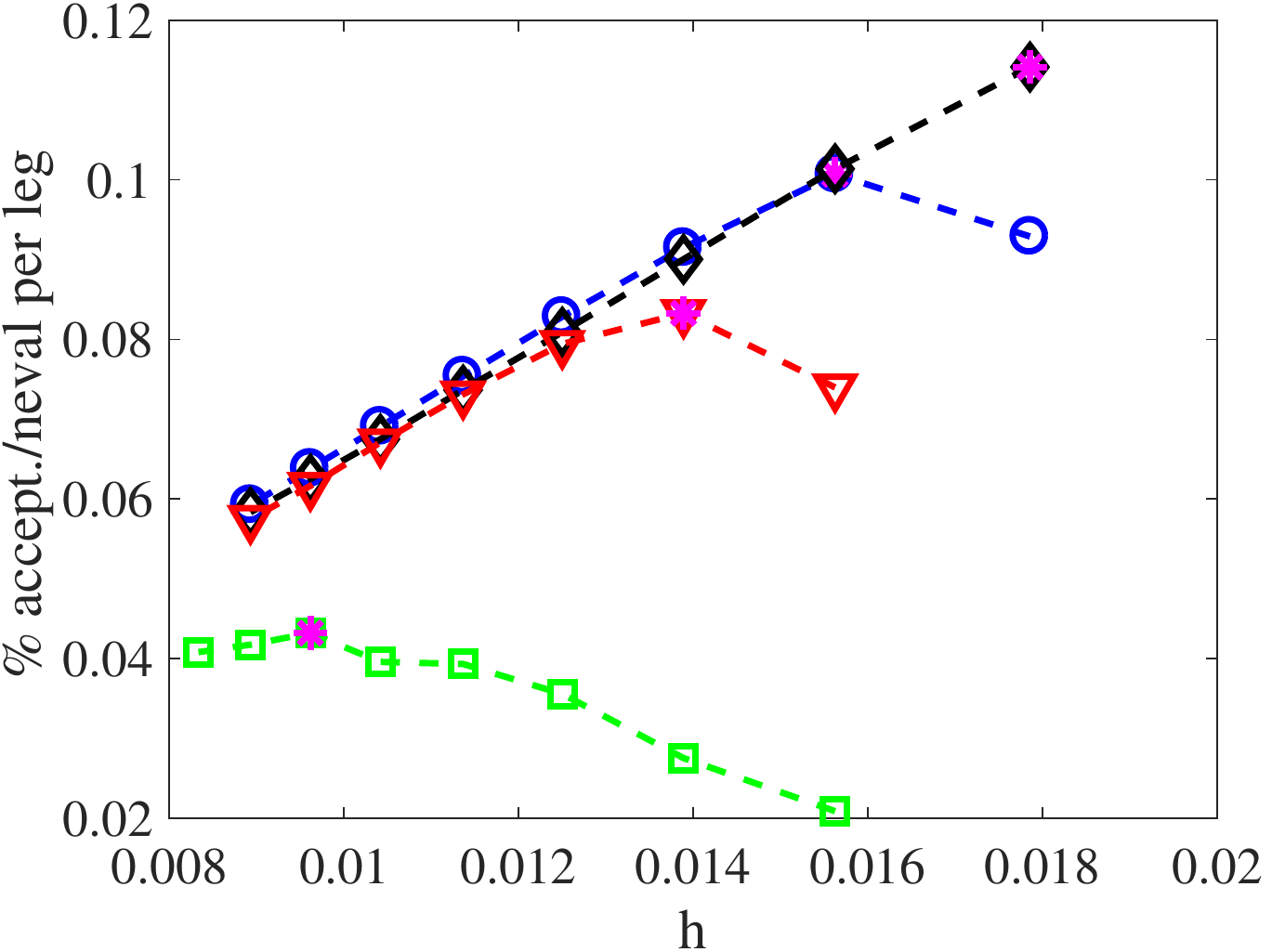}
\\
\includegraphics[width=0.48\hsize]{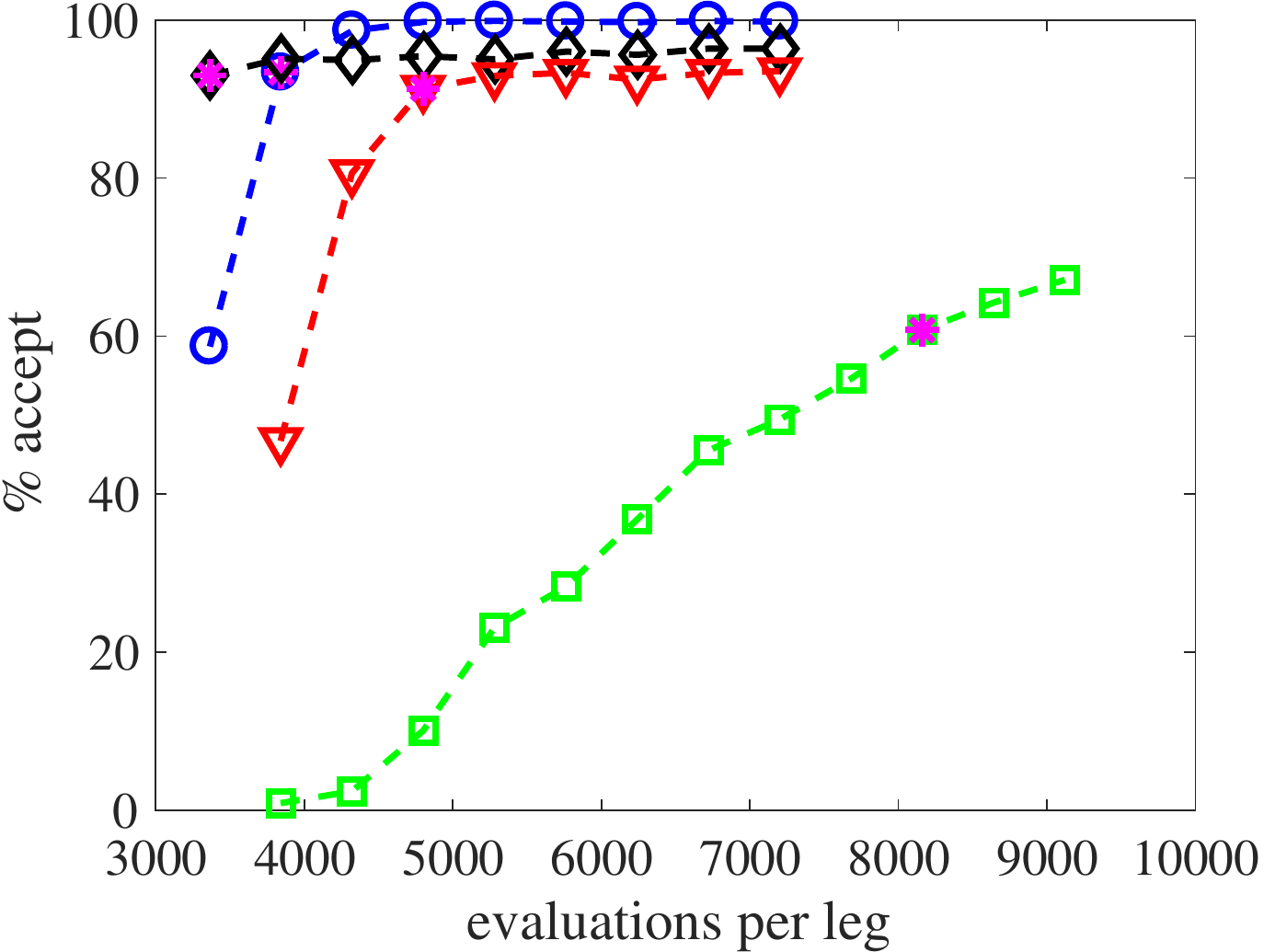}\quad
\includegraphics[width=0.48\hsize]{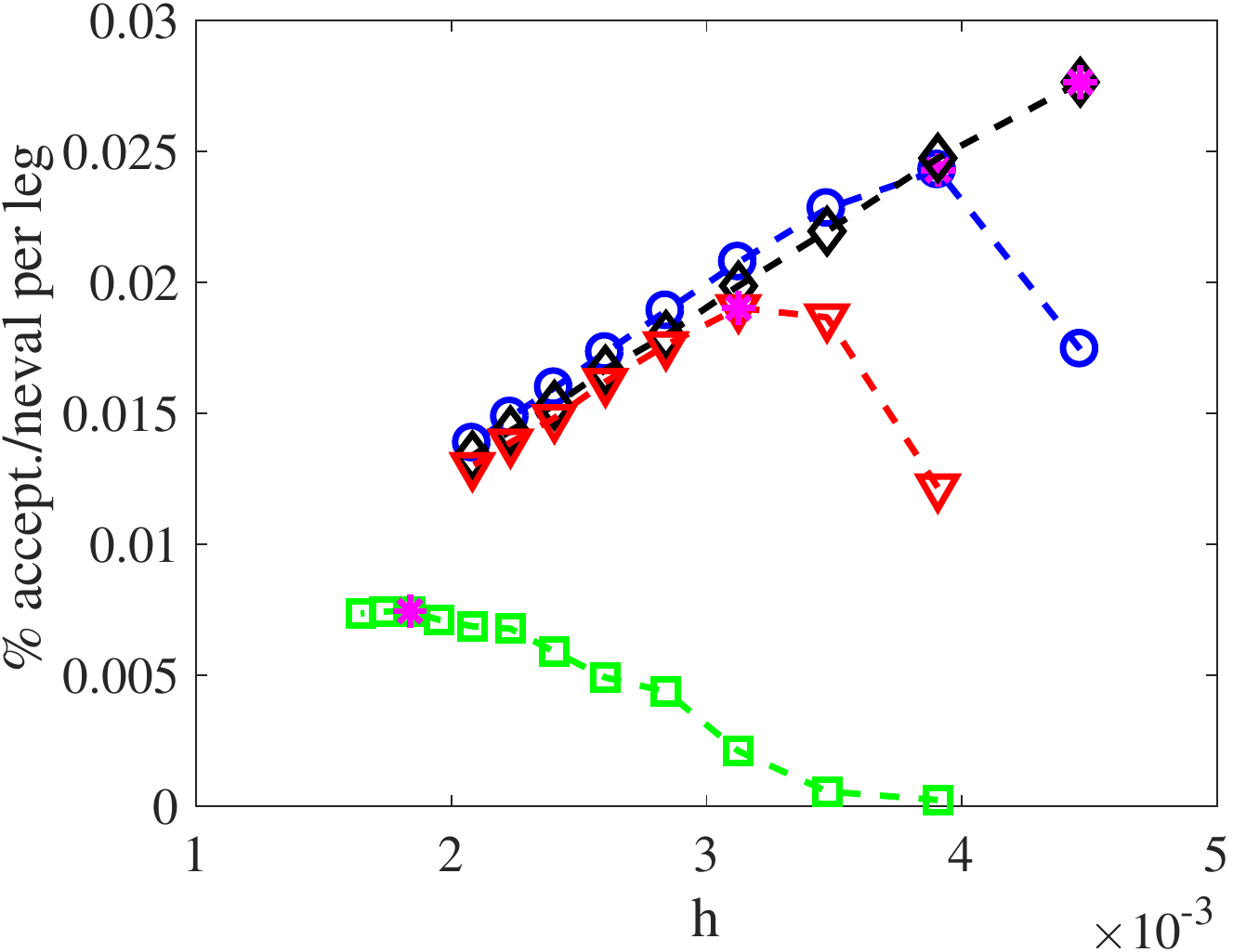}
\\
\includegraphics[width=0.48\hsize]{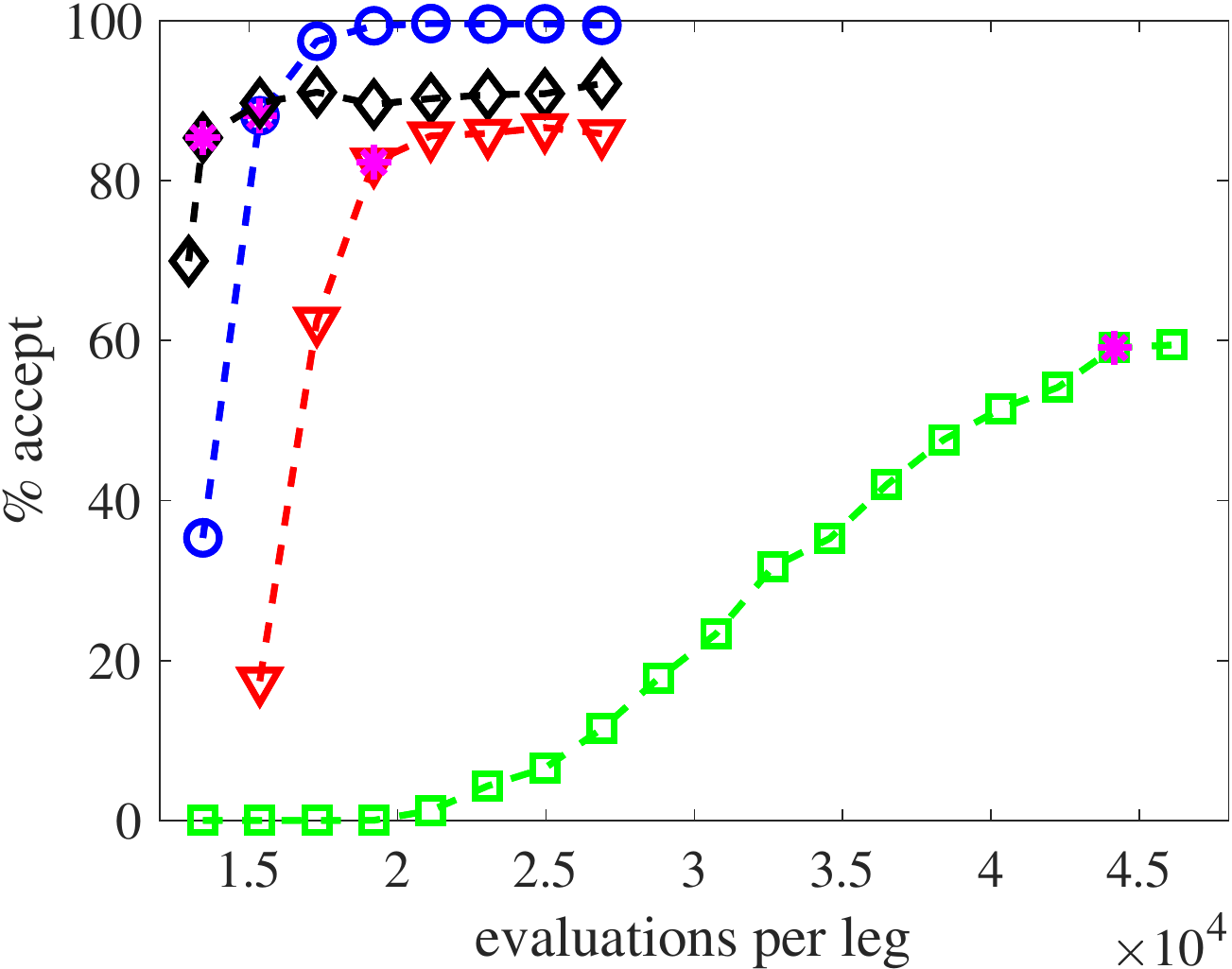}\quad
\includegraphics[width=0.48\hsize]{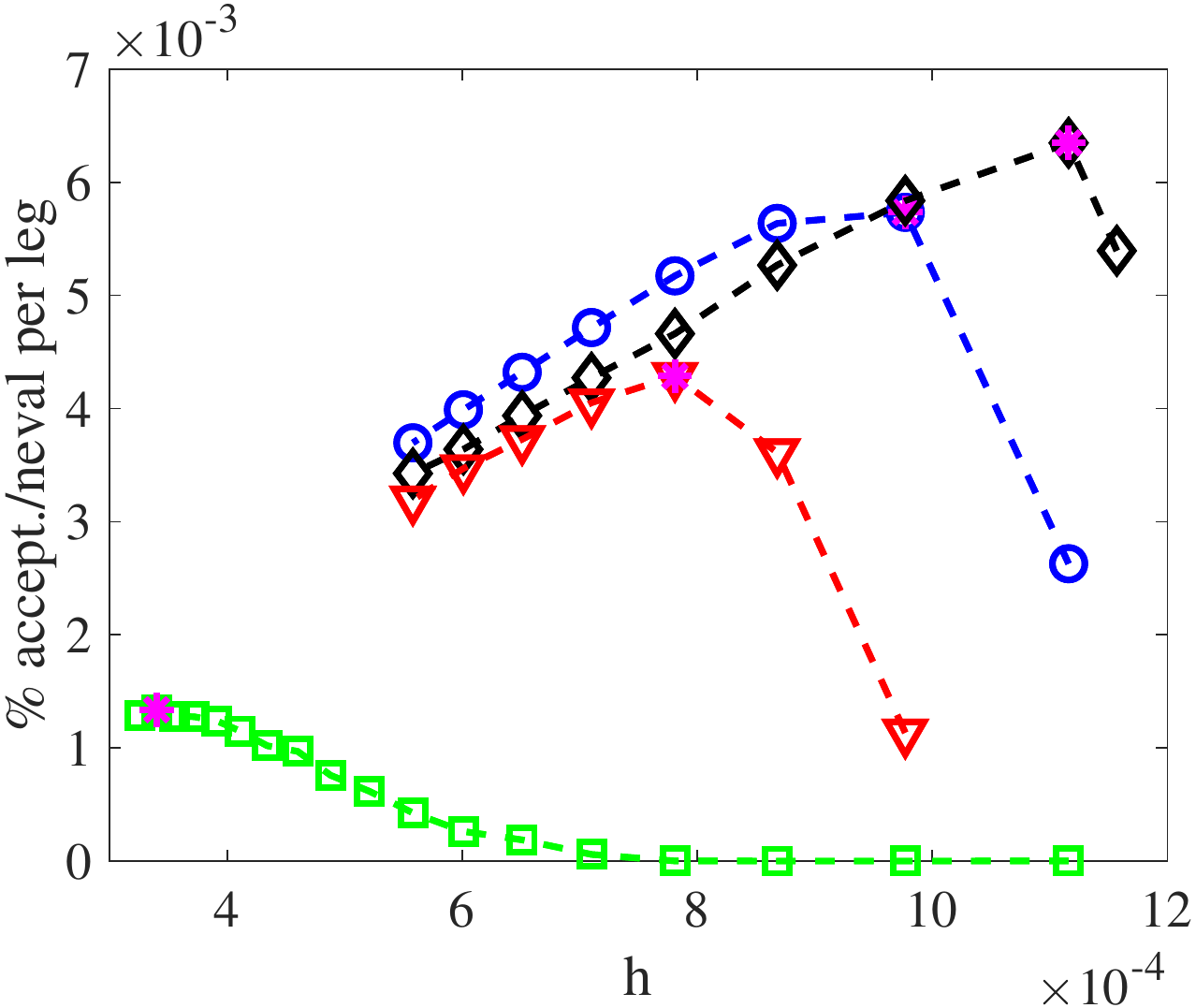}
\caption{\small Gaussian model. Comparison of the symmetrically processed methods with $b=0.348674$ (blue circles) and $b=0.340200$ (black diamonds) with the integrators BlCaSa (red triangles) and leapfrog (green squares). The top, middle and bottom panels have $d=256$,
$d=1024$ and $d=4096$ respectively. On the left,  acceptance percentage as a function of the number of evaluations per integration leg. On the right, acceptance percentage divided by number of gradient evaluations
for different values of the step size $h$. For each integrator and \(d\) a star symbol on the marker identifies the most efficient run. }
\label{Gaussian_efic}
\end{figure}

As in \cite{sergferyo} our first test problem is the model multivariate Gaussian target  with density (\(q_j\) is the \(j\)-component of \(q\))
\[
%\begin{equation}\label{eq:gaussianexample}
\propto\exp{\left ( -\frac{1}{2} \sum_{j=1}^d j^2 q^2_j \right )}.
%\end{equation}
%
\]
We have used the dimensions \(d=256\), \(d= 1024\)  and \(d=4096\): our interest is in problems of large dimensionality, those where efficiency is more important (for targets in very low dimension leapfrog/Verlet performs very satisfactorily as a consequence of its optimal linear stability properties \cite{acta}).
 As in \cite{daniel}, integrations were performed {{}in the interval \(0\leq t\leq 5\)}, for different stable choices of \(h\),  and we generated Markov chains of 5,000 elements initialized from the target distribution. {{} For reasons discussed in detail in \cite{daniel} and borne out by the extensive numerical experiments reported in that paper,}
  the efficiency of the algorithm and the \emph{quality of the samples} is entirely determined by the acceptance rate and therefore we will focus on this metric. The conclusions to be drawn as to the merit of the different integrators based on the behaviour of the acceptance rate are the \emph{same} that may be obtained by considering other metrics such as mean square displacement, effective sample sizes of the different components of \(q\), etc.

 Our results are summarized in Figure~\ref{Gaussian_efic}, where we compare the symmetrically processed integrators with $b=0.348674$  and $b=0.340200$  (see Table~\ref{table}) against the integrator BlCaSa and standard velocity leapfrog/Verlet. The intermediate values
$b=0.346660$  and $b=0.343684$ in the table were also run; the results  interpolate between those of $b=0.348674$  and those of $b=0.340200$ and are not reported so as to not blur the plots. For the symmetrically processed methods, the reported gradient evaluation count includes the evaluations required by the pre and postprocessing.

The \emph{left} subplots give, for the four integrators,  acceptance rate as a function of the number of gradient evaluations per integration leg. Of course, for each integrator, more gradient evaluations per leg (corresponding to smaller values of \(h\)) provide higher acceptance rates. The advantage of the three multistage integrators over Verlet is clearly borne out, and this advantage becomes more pronounced as the dimensionality increases (i.e.\ as  it becomes more important to have efficient algorithms). For the integrator with $b=0.348674$, the runs with more gradient evaluations deliver acceptance rates of virtually \(100\%\), which is in agreement with the low energy errors that correspond to the extremely low value of \(\|\rho\|_{\bar h}\) reported in the table. Also note that $b=0.340200$ operates very well for runs with fewer evaluations (larger \(h\)) and this matches the fact that this method was derived using a larger value of \(\bar h\).

Even though very low acceptance rates are unwelcome, very high acceptance rates are also undesirable. In fact, it is well understood \cite{optimal,acta,daniel} that, for a given integrator, a very high acceptance rate signals that the value of \(h\) being employed is too small: one would do better by using the available computational budget to obtain longer Markov chains by getting proposals at a lower computational cost per leg, even if that implies rejecting more proposals.
For this reason, it is not easy to assess the efficiency of the different integrators by examining the left subplots we have been discussing.
This efficiency  is best assessed from the \emph{right} subplots that give, for different values of \(h\), the acceptance rate \emph{per unit computational cost}, i.e. the result of dividing the empirical acceptance rate achieved in a simulation by the number of gradient evaluations required. Larger values of this metric correspond to more efficient sampling. The figure shows that according to our discussion, for a given integrator, the best efficiency (i.e. the highest marker) is not obtained when \(h\) is too small or too large.  In the figure the most efficient run for each of the four integrators has been indicated by a purple star on the corresponding marker. The right panels make it clear that BlCaSa is far more efficient than Verlet and the gap in efficiency increases with the dimensionality. For \(d=4096\) the optimal value of \(h\) for Verlet is \(\approx 2\times 10^{-4}\) and then the acceptance rate divided by the number of gradient evaluations is \(\approx 1\times 10^{-3}\); for BlCaSA the optimal value of \(h\) is substantially larger
\(\approx 8\times 10^{-4}\) and yields an acceptance rate per unit computational cost  \(\approx 4\times 10^{-3}\), approximately a fourfold improvement on Verlet. In turn the performance of the symmetrically processed integrators clearly improves on BlCaSa. For
\(d=4096\), the integrator with $b=0.340200$ is roughly five times more efficient than Verlet and approximately 50\% more efficient than BlCaSa.

\begin{figure}[t]
\includegraphics[width=0.48\hsize]{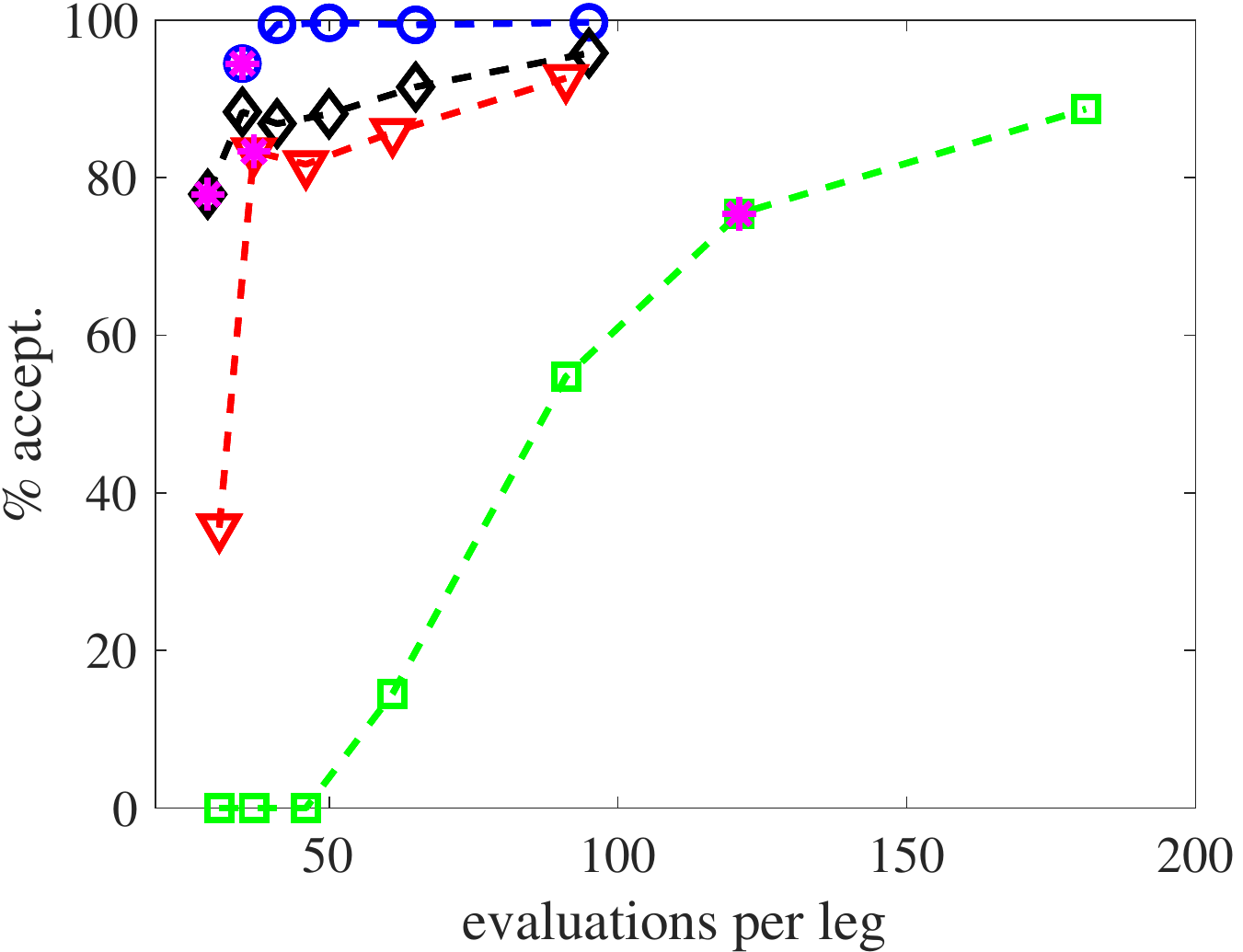}\quad
\includegraphics[width=0.48\hsize]{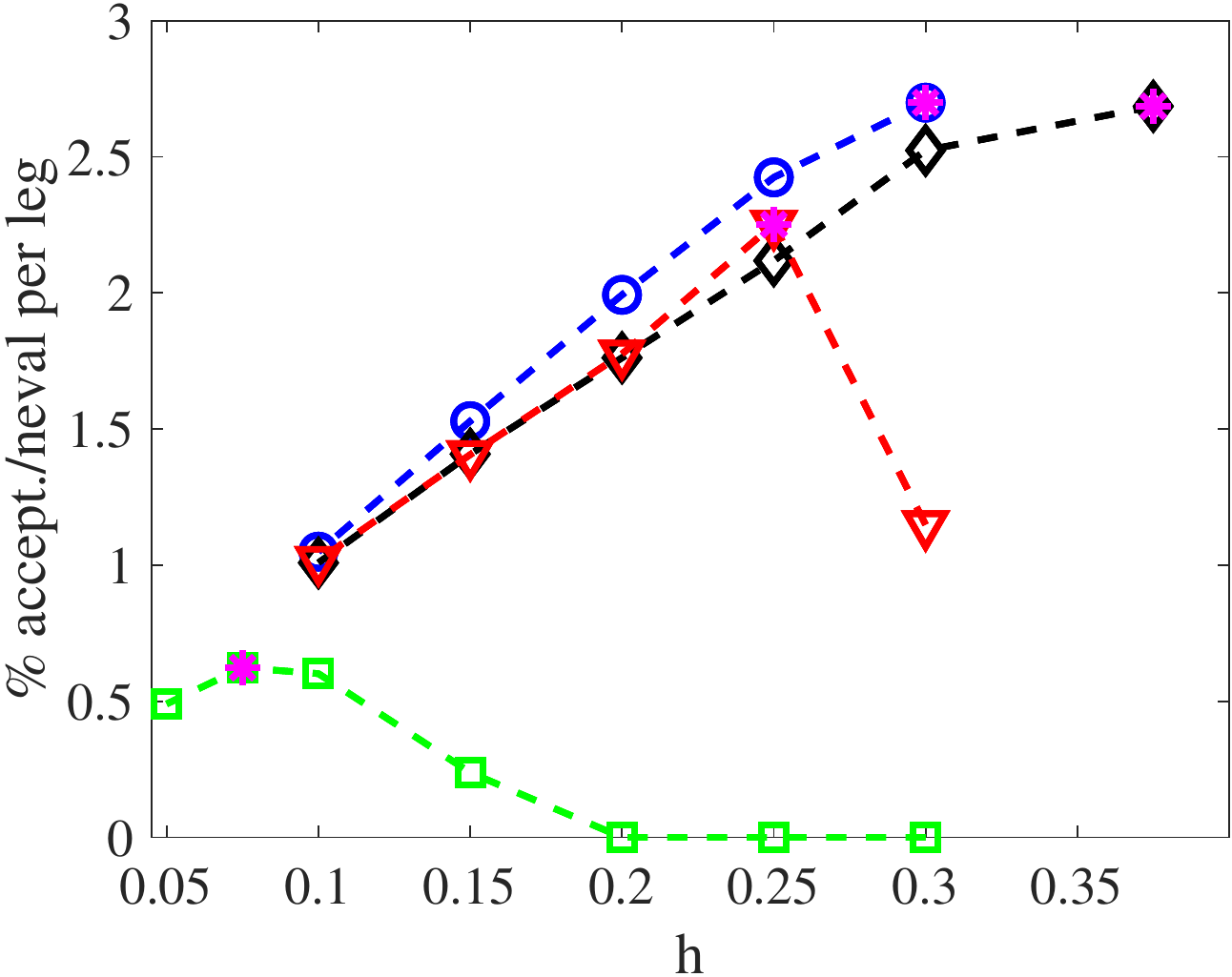}
\caption{\small Log-Gaussian Cox problem. Comparison of the symmetrically processed methods with $b=0.348674$ (blue circles) and $b=0.340200$ (black diamonds) with the integrators BlCaSa (red triangles) and leapfrog (green squares). On the left,  acceptance percentage as a function of the number of evaluations per integration leg. On the right, acceptance percentage divided by number of gradient evaluations
for different values of the step size $h$. For each integrator a star symbol on the marker identifies the most efficient run.}
\label{Cox_efic}
\end{figure}

We now present results (Figure~\ref{Cox_efic}) for a well-known Log-Gaussian Cox problem in Bayesian inference,  often used as a test \cite{christensen,girolami, daniel}. The dimension is \(d = 4096\). The  details of the sampling (length of the chain, burn-in, initialization, step sizes, time length of the integration legs, etc.) are exactly as in \cite{daniel} and will not be reproduced here. The general pattern of the results is very similar to that in Figure~\ref{Gaussian_efic}. Again BlCaSa is roughly four times more efficient than Verlet. Now $b=0.348674$ and $b=0.340200$ are equally efficient and their common efficiency is roughly \(25\%\) higher than that of BlCaSa.

{We also carried out experiments with the Boltzmann distribution of the Alkane molecule considered in \cite{cances} (see also \cite{cedric,daniel}). The results do not provide additional insights and will not be reported here. }

A final remark: since the number of gradient evaluations is  \(3N+1\) for BlCaSa and \(3N+5\) for the symmetrically processed integrators, the advantages of processing will decrease if the integration legs use very few time steps and become more marked when many time steps are taken.

\section{Other uses of symmetric modified processing}
\label{final}
As we will discuss now, HMC sampling is not the only application where symmetric modified processing may be useful.

In several problems, including the time-integration of parabolic partial differential equations or the Schr\"{o}dinger equation in imaginary time (as used in path integral computations),
 all the coefficients appearing in a splitting algorithm have to be positive (or complex with nonnegative real part). As is well known (see \cite{pos} and its references) this sets an upper bound of two to the order of accuracy that may be achieved by unprocessed splitting integrators. On the other hand, by using \emph{modified potentials}, it is possible \cite{chin,rowlands,taka} to construct kernels with effective order four and positive coefficients and this raises the question of how to construct suitable \emph{pre and post-processors with positive coefficients}.
Unfortunately,
for the standard processing format given by \eqref{eq:proc} and  \eqref{eq:pi},  the relations \eqref{eq:cons} imply that at least one \(c_i\) and one \(d_j\) have to be negative. This difficulty may be circumvented by symmetric modified processing as follows.

{{} Going back to the general setting of Section~\ref{quasi}, if \(N\geq 2\), the kernel \(\psi_h\) to be processed} is time reversible and we set \(\kappa_h = \psi_h\circ \pi_h\), then the definition in \eqref{eq:quasi} may obviously be rewritten as
\begin{equation} \label{eq:tpsi}
 \widetilde{\psi}_{N,h} = \kappa_h^\star \circ \psi_h^{N-2} \circ\kappa_h.
 \end{equation}
  {{}If now the map \(\kappa_h\) is sought in the form
 \[
 \kappa_h  = \phi_{\alpha_s h}^{A} \circ \phi_{\beta_s h}^{B}\circ \cdots \circ \phi_{\alpha_1 h}^{A} \circ \phi_{\beta_1 h}^{B},
 \]
 consistency demands
 \[\alpha_1 +\dots +\alpha_s  = 1,\qquad \beta_1 +\dots +\beta_s  = 1,
  \]
  and these relations may be satisfied with \(\alpha_i,\beta_i\geq 0\), \(i=1,\dots,s\).

  As an illustration of this idea, we have constructed a map \(\kappa_h\) to obtain via \eqref{eq:tpsi} a processed fourth-order integrator based on the well-known Rowlands method \cite{rowlands} for Hamiltonian systems of the form \eqref{eq:ham}. The Rowlands kernel uses modified potentials; it may be viewed as a modification of the standard velocity Verlet method where, in the kicks, the potential \(V(q)\) is replaced by a potential \(\widetilde V(q)\)
  obtained by adding to \(V(q)\) a multiple of
$\nabla V(q)^T M^{-1} \nabla V(q)$. More specifically, the Rowlands scheme reads
\begin{equation}  \label{rowlandsk}
  \psi_h = \widetilde{\phi}_{(\frac{1}{2},\frac{1}{48})h}^B \circ \phi_h^A \circ \widetilde{\phi}_{(\frac{1}{2},\frac{1}{48})h}^B,
\end{equation}
where $\widetilde{\phi}_{(b,c) h}^B$ denotes the $h$-flow of
\[
  \frac{d}{dt} q = 0,\qquad \frac{d}{dt} p = -\nabla \widetilde{V}_{(b,c)h}(q),
 \]
with
\[
   \widetilde{V}_{(b,c)h}(q) := b \, V(q) - h^2 \, c \, \nabla V(q)^T M^{-1} \nabla V(q).
\]
To achieve order four, a processor $\pi_h$ has to satisfy four order conditions (including the equation that ensures consistency). Those conditions are easily translated into as many conditions for the map $\kappa_h = \psi_h \circ \pi_h$. We looked for $\kappa_h$ of the form
\[
  \kappa_h = \phi_{\alpha_2 h}^{A} \circ \phi_{\beta_2 h}^{B}
   \circ \phi_{\alpha_1 h}^{A} \circ \widetilde{\phi}_{(\beta_1, \gamma_1) h}^{B},
\]
(note that one flow with modified potential is used), with five parameters at our disposal. The four polynomial  equations to be satisfied have the particular solution
\[
  \alpha_1 = \frac{6}{7}, \qquad \beta_1 =   \frac{23}{72}, \qquad \gamma_1 = \frac{55}{1728}, \qquad \alpha_2 =  \frac{1}{7}, \qquad
  \beta_2 =  \frac{49}{72},
\]
where all coefficients are positive.

Methods of this class, involving flows of modified potentials,
 can also be used for  Hamiltonian Monte Carlo
simulations in the context of lattice field theory \cite{kennedy09fgi,kennedy13shp}.}

 \bigskip

{\bf Acknowledgement.}  S.B., F.C. and J.M.S.-S. have been supported by project
  PID2019-104927GB-C21 (AEI/FEDER, UE). M.P.C. has been supported by projects PID2019-104927GB-C22 (GNI-QUAMC), (AEI/FEDER, UE)
 VA105\-G18 and VA169P20 (Junta de Castilla y Leon, ES) co-financed by
FEDER funds.

 \end{document}